\newtheorem*{rep@theorem}{\rep@title}
\newcommand{\newreptheorem}[2]{%
\newenvironment{rep#1}[1]{%
 \def\rep@title{#2 \ref{##1}}%
 \begin{rep@theorem}}%
 {\end{rep@theorem}}}
\theoremstyle{plain}
\newtheorem{thm}{Theorem}[section]
\newtheorem{prop}[thm]{Proposition}
\newtheorem{lem}[thm]{Lemma}
\newtheorem{cor}[thm]{Corollary}
\newtheorem{rem}[thm]{Remark}
\newtheorem{dfn}[thm]{Definition}
\newcommand{\N}{\mathbb{N}}
\newcommand{\Z}{\mathbb{Z}}
\newcommand{\T}{\mathbb{T}}
\newcommand{\im}{\operatorname{im}}
\newcommand{\calD}{\mathcal{D}}
\newcommand{\calC}{\mathcal{C}}
\newcommand{\calA}{\mathcal{A}}
\newcommand{\coker}{\operatorname{coker}}
\definecolor{purple}{rgb}{.54,0,.54}
\definecolor{green}{HTML}{228B22}
\title{On The Evans Chain Complex}
\author{S. Joseph Lippert}
\date{\today}
\begin{document}

\maketitle

\begin{abstract}
    We elaborate on the construction of the Evans chain complex for higher-rank graph $C^*$-algebras. Specifically, we introduce a block matrix presentation of the differential maps. These block matrices are then used to identify a wide family of higher-rank graph $C^*$-algebras with trivial K-theory. Additionally, in the specialized case where the higher-rank graph consists of one vertex, we are able to use the K{\"u}nneth theorem to explicitly compute the homology groups of the Evans chain complex. 
\end{abstract}

\section{Introduction}
In the field of $C^*$-algebra classification, $K$-theory computation has long been a captivating question. An algebra's $K$-theory groups will fully classify simple, separable, nuclear, and purely infinite $C^*$ algebras satisfying the Universal Coefficient Theorem by the Kirchberg-Phillips classification theorem \cite{Kirchberg_class,Phillips_class}. Further still, the Elliott Invariant, which consists of the $K$-theory groups as well as information about the trace simplex, has shown incredible versatility in $C^*$-algebra classification \cite{classification-KK-cont,finite-simple-amen-1,finite-simple-amen-2}.

On the other hand, higher-rank graph $C^*$-algebras have been an active field of research for the last quarter century. These algebras were introduced in \cite{kp} building off work in \cite{rob-steg}. They have offered a number of creative solutions to long standing problems in the field. They were used to show that every UCT Kirchberg algebra has nuclear dimension zero \cite{ruiz-sims-sorensen}, have been related to solutions of the Yang-Baxter equations \cite{yang,vdovina-DM-solns}, and have reformulated multipullback quantum odd sphere information in terms of universal properties \cite{odd-spheres}.

With all of these promising results and many more on the horizon, it seems crucial to refine our understanding of higher-rank graph algebra $K$-theory. A landmark paper in this direction was \cite{evans08} in which the work of \cite{rob-steg-k-thy, kasp, schoc} was built out into a spectral sequence converging to the $K$-theory of a higher-rank graph $C^*$-algebra. The goal of this paper is to better understand the tools developed by Evans and use them to make a number of explicit computations.

\section{Preliminaries}

\textbf{Notation:} We will assume that $0\in \N$. Further for $1\leq j\leq k$ we will use $\kappa_j:\N \to \N^k$ to denote the inclusion into the $j^{th}$ coordinate and $\iota_j:\N^j\to \N^k$ the canonical inclusion onto the first $j$ generators. For a small category, $\Lambda$, we denote the object set as $\Lambda^0$. Lastly, for a countable category, $\Lambda$, we define the group $\Z\Lambda^0:=\bigoplus_{v\in \Lambda^0} \Z$. 

        \begin{dfn}
            \cite[Def 1.1]{kp}  Let $\Lambda$ be a countable category and $d:\Lambda \to \N^k$ a functor. If $(\Lambda, d)$ satisfies the \textit{factorization property}- that is, for every morphism $\lambda \in \Lambda$ and $n,m\in \N^k$ such that $d(\lambda)=m+n$, there exist unique $\mu,\nu\in \Lambda$ such that $d(\mu)=m$, $d(\nu)=n$, and $\lambda = \mu\nu$- then $(\Lambda,d)$ is a \textit{$k$-graph} (or graph of rank $k$).
        \end{dfn}

As mentioned in the introduction, these objects can be used to build $C^*$-algebras. Specifically this is done using the following relations:

\begin{dfn}
    \cite[Def 1.5]{kp} Let $\Lambda$ be a source-free row-finite $k$-graph. Then $C^*(\Lambda)$ is the universal $C^*$-algebra generated  by a family $\{s_\lambda: \lambda\in\Lambda\}$ of partial isometries satisfying:
    \begin{enumerate}[(i)]
        \item $\{s_v: v\in \Lambda^0\}$ is a family of mutually ortogonal projections,
        \item $s_{\lambda\mu}=s_\lambda s_\mu$ for all composable $\lambda,\mu$,
        \item $s_\lambda^*s_\lambda=s_{s(\lambda)}$ for all $\lambda\in \Lambda$,
        \item For all $v\in \Lambda^0$ and $n\in \N^k$ we have ${\displaystyle \sum_{\lambda\in r^{-1}(v)\cap d^{-1}(n)} s_\lambda s_\lambda^* }$.
    \end{enumerate}
\end{dfn}

Many facets of this algebra can be studied by studying the higher rank graph it is built from. The focus of this article is on the $K$-theory groups of $C^*(\Lambda)$. These groups have a deep relationship with the coordinate graphs of $\Lambda$ which we define now.

        \begin{dfn}
 \label{pullback}
	\cite[Definition 1.9]{kp} Let $f:\N^\ell \to \N^k$ be a monoid morphism. If $(\Lambda, d)$ is a $k$-graph, the \textit{pullback} is the      $\ell$-graph $f^*(\Lambda)$ defined by $$f^*(\Lambda) = \{(\lambda,n):d(\lambda)=f(n)\}$$ with $d(\lambda,n)=n$,      $s(\lambda,n)=s(\lambda)$ and $r(\lambda,n)=r(\lambda)$.

 We call $\kappa_i^*(\Lambda)$ the $i^{th}$ \textit{coordinate graph}, and we use $M_i(\Lambda)$ to denote the adjacency matrix of the $i^{th}$ coordinate graph (simplified to $M_i$ when there is no ambiguity).
 
\end{dfn}

As alluded to in the introduction, a great tool for $K$-theory computation is spectral sequences. We now introduce basic definitions, notation, and results of these tools.

    \begin{dfn}
        \label{spec-seq}
        A spectral sequence is a set of abelian groups $E_{p,q}^r$ together with differentials $d_{p,q}^r: E_{p,q}^r \to E^r_{p-r,q+r-1}$ such that $E_{p,q}^{r+1}\cong \ker(d_{p,q}^r)/\im(d_{p+r,q-r+1})$
    \end{dfn}
    A helpful metaphor for these objects is a book where $r$ indicates the page you are on. The $E^2$ page would take the following shape in general:
    \begin{center}
        \begin{tikzcd}
            \cdots& E_{0,1}^2  &  E_{1,1}^2  & E_{2,1}^2 & E_{3,1}^2  & E_{4,1}^2&\cdots \\
            \cdots & E_{0,0}^2 & E_{1,0}^2 \arrow[ull, "d_{1,0}^2"]  & E_{2,0}^2 \arrow[ull, "d_{2,0}^2"] & E_{3,0}^2 \arrow[ull, "d_{3,0}^2"]& E_{4,0}^2 \arrow[ull, "d_{4,0}^2"] & \cdots \arrow[ull, "d^2"]
        \end{tikzcd}
    \end{center}

    \begin{dfn}
        A spectral sequence $E$ is said to \textit{stabilize} if there exists some $r_0$ such that $E^r\cong E^{r_0}$ for all $r\geq r_0$. This stable page is denoted $E^\infty$. A stable spectral sequence is said to \textit{converge} to $H_*$ (a family of abelian groups) if for each $n$ there exists a filtration
        \[ 0=F_s(H_n)\subseteq \cdots \subseteq F_p(H_n) \subseteq \cdots \subseteq H_n \ \]
        such that $E^\infty_{p,q}\cong F_p(H_{p+q})/F_{p-1}(H_{p+q})$.
    \end{dfn}

        \begin{thm}
    \label{evans-comp}
        \cite[Theorem 3.15]{evans08}
         \textbf{The Evans Chain Complex: }For $p,k\in \N$ with $0\leq p \leq k$ define the set $$\N_{p,k}:=\{a=(a_1,\cdots,a_p)\in \{1,\cdots, k\}^p: a_i<a_{i+1} \forall i\} $$
        Equip these sets with the standard map such that for $a\in \N_{p,k}$ the element $a^i\in \N_{p-1,k}$ is the tuple $a$ with the $i^{th}$ coordinate removed. Additionally, $N_{0,k}:=\{*\}$ with $a^1=*$ for all $a\in N_{1,k}$.
        
        For a row-finite, source-free $k$-graph, $\Lambda$, there exists a spectral sequence $E,d$ converging to $K_*(C^*(\Lambda))$ with $E^\infty_{p,q}\cong E^{k+1}_{p,q}$ and 
        \[E^2_{p,q} \cong \begin{cases}H_p(\calD),& 0\leq p \leq k \wedge 2|q\\ 0, &\text{otherwise} \end{cases}\]
        where $\calD$ is the chain complex 
        \begin{equation}
        \label{evans-chain-comp}
            \calD_p \cong \begin{cases}\bigoplus_{a\in N_{p,k}} \Z\Lambda^0,& 0\leq p \leq k \\ 0, &\text{otherwise} \end{cases}
        \end{equation}
        with differentials
        \begin{equation}
            \label{evans-differential}
            \bigoplus_{a\in N_{p,k}} m_a \mapsto \bigoplus_{b\in N_{p-1,k}} \sum_{a\in N_{p,k}} \sum_{i=1}^p (-1)^{i+1} \delta_{b,a^i}B_{a_i} (m_a)
        \end{equation}
        for $1\leq p \leq k$ and $B_{a_i}:= I-M_{a_i}^T$ (which we call the co-adjacency matrix).
    \end{thm}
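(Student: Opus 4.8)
The plan is to recover $K_*(C^*(\Lambda))$ as the $K$-theory of a crossed product of an AF algebra by $\Z^k$, run the homological spectral sequence attached to such a crossed product, and then identify its $E^2$-page with $H_*(\calD)$ via a Koszul-complex computation. First I would reduce to a crossed product: letting $\Lambda\times_d\Z^k$ denote the skew product of $\Lambda$ by its degree functor, the dual action $\hat\gamma$ of $\Z^k$ on $\F:=C^*(\Lambda\times_d\Z^k)$ satisfies $\F\rtimes_{\hat\gamma}\Z^k\cong C^*(\Lambda)\otimes\K$ (as in \cite{kp}), and $\F$ is AF, being an increasing union of finite-dimensional subalgebras (it is Morita equivalent to the gauge fixed-point algebra $C^*(\Lambda)^{\T^k}$). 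Hence, by stability of $K$-theory, $K_*(C^*(\Lambda))\cong K_*(\F\rtimes_{\hat\gamma}\Z^k)$.

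Next I would invoke the spectral sequence obtained by iterating the Pimsner--Voiculescu exact sequence $k$ times (following \cite{kasp,schoc}): for a $\Z^k$-$C^*$-algebra $A$ there is a spectral sequence with $E^2_{p,q}\cong H_p(\Z^k;K_q(A))$ converging to $K_{p+q}(A\rtimes\Z^k)$. Applied to $A=\F$: since $\F$ is AF, $K_1(\F)=0$, so $E^2_{p,q}=0$ for $q$ odd and $E^2_{p,q}=H_p(\Z^k;K_0(\F))$ for $q$ even; and since $\Z^k$ has homological dimension $k$, $E^2_{p,q}=0$ unless $0\le p\le k$. Tracking the bidegrees of $d^r_{p,q}\colon E^r_{p,q}\to E^r_{p-r,q+r-1}$ then shows every differential vanishes once $r\ge k+1$: on even pages the $q$-parity makes source and target have opposite parity, and on an odd page $r$ a nonzero differential would need $r\le p\le k$. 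Thus $E^\infty\cong E^{k+1}$, and the resulting filtration of $K_{p+q}(\F\rtimes\Z^k)$ is exactly the asserted convergence statement.

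It then remains to identify $H_p(\Z^k;K_0(\F))$ with $H_p(\calD)$. The factorization property forces the coordinate matrices to commute, $M_iM_j=M_jM_i$, so $\Z\Lambda^0$ becomes a module over $\Z[\N^k]=\Z[x_1,\dots,x_k]$ with $x_i$ acting as $M_i^T$; and the standard dimension-group computation for the AF core identifies $K_0(\F)$, as a $\Z[\Z^k]$-module, with the localization $\Z\Lambda^0\otimes_{\Z[\N^k]}\Z[\Z^k]=\varinjlim\bigl(\Z\Lambda^0\xrightarrow{M_i^T}\Z\Lambda^0\xrightarrow{M_i^T}\cdots\bigr)$ carrying its natural shift action (generalising the familiar case $k=1$). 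Now $H_p(\Z^k;K_0(\F))=\operatorname{Tor}^{\Z[\Z^k]}_p(\Z,K_0(\F))$, and computing via the Koszul resolution of $\Z=\Z[\Z^k]/(x_i-1)$ over $\Z[\Z^k]$ realizes this as the Koszul homology of the commuting operators $\{1-x_i\}$ on $K_0(\F)$. Because $\Z[\Z^k]$ is flat over $\Z[\N^k]$ and $\Z$ is already $\Z[\N^k]/(x_i-1)$, a flat base change collapses this to $\operatorname{Tor}^{\Z[\N^k]}_p(\Z,\Z\Lambda^0)$; since $(x_1-1,\dots,x_k-1)$ is a regular sequence in $\Z[x_1,\dots,x_k]$, that $\operatorname{Tor}$ is the homology of the Koszul complex of the operators $\{1-M_i^T\}$ on $\Z\Lambda^0$, namely of $\bigl(\bigwedge^\bullet\Z^k\otimes\Z\Lambda^0,\partial\bigr)$. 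Writing the basis of $\bigwedge^p\Z^k$ as the increasing tuples $N_{p,k}$ and putting $B_{a_i}=I-M_{a_i}^T$ expresses $\partial$ as the alternating sum \eqref{evans-differential} (the sign mismatch between $1-M_i^T$ and $M_i^T-1$ is absorbed by the chain isomorphism that is $(-1)^\bullet$ in each degree), identifying this complex with $\calD$; hence $E^2_{p,q}\cong H_p(\calD)$ when $0\le p\le k$ and $q$ is even, and $0$ otherwise.

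I expect the main obstacle to lie in the first and third steps rather than in the spectral sequence formalism: producing the $\hat\gamma$-equivariant stabilization isomorphism together with the AF structure of $\F$ carefully enough to pin down the $\Z^k$-action on $K_0(\F)$---a stray transpose or inverse there would corrupt the co-adjacency matrices---and handling the edge conventions in the iterated Pimsner--Voiculescu sequence so that the abutment is $K_*(C^*(\Lambda))$ with the stated filtration. By contrast, the homological algebra (Koszul complexes computing $\operatorname{Tor}$, flat base change) and the degeneration $E^\infty\cong E^{k+1}$ are formal once the correct module and action are in hand.
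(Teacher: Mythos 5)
The paper states this result purely as a citation of Evans's Theorem 3.15 and supplies no proof of its own, so the relevant comparison is with Evans's original argument, which your proposal reconstructs correctly and in essentially the same way: Takai duality applied to the skew product to realize $C^*(\Lambda)\otimes\K$ as a crossed product of the AF core $\F$ by $\Z^k$, the Kasparov--Schochet spectral sequence with $E^2_{p,q}\cong H_p(\Z^k;K_q(\F))$ (collapsing to even rows since $K_1(\F)=0$), and a Koszul-resolution computation identifying that group homology with $H_p(\calD)$. Your flat-base-change reduction from $K_0(\F)\cong\varinjlim\Z\Lambda^0$ back to $\Z\Lambda^0$ itself is a clean rendering of the corresponding lemma in Evans, and the degeneration $E^\infty\cong E^{k+1}$ already follows from the support condition $0\le p\le k$ alone, without the parity bookkeeping.
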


\section{Block Matrix Presentation}

In this section, we will investigate the indexing sets $\N_{p,k}$ more closely. Using this information, we will partition the maps $\partial$ of $\calD$ into a recursive block matrices. This recursion is then leveraged to make explicit computations for $k$-graphs with an invertible co-adjacency matrix and monoid $k$-graphs.

    \begin{prop}
    \label{bij}
        For a fixed $k$, there exists a partition of $N_p= N_p^+\sqcup N_p^-$ with $N_p^+:= \{a\in N_p: a_p=k\}$ (the $k$ index has been supressed for ease of reading).
        \begin{enumerate}[(i)]
            \item For $a\in N_p^-$ and $1\leq i\leq p$, $a^i \in N_{p-1}^-$.
            \item Define $\psi: N_p^+\to N_{p-1}^-$ by $a\mapsto a^p$. The function $\psi$ is bijective
            \item There is a natural bijective inclusion $\varphi: N_{p,k-1}\to N_{p,k}^-$.
        \end{enumerate}
    \end{prop}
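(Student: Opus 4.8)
The plan is to unwind the definitions: an element of $N_{p,k}$ is nothing but a strictly increasing $p$-tuple with entries in $\{1,\dots,k\}$, and whether such a tuple $a$ lies in $N_p^+$ or $N_p^-$ is detected solely by its largest entry $a_p$. Before starting I would pin down the bottom-of-recursion conventions $N_{0,k}^-:=N_{0,k}=\{*\}$ and $N_{0,k}^+:=\emptyset$, so that all three statements make sense when $p=1$; these are the natural choices and are compatible with how $\calD$ and $\partial$ are indexed in Theorem~\ref{evans-comp}.

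For (i), the point is that deleting a coordinate from a strictly increasing tuple leaves a strictly increasing tuple, so $a^i\in N_{p-1,k}$ automatically; one then just checks its last entry is $<k$. If $a\in N_p^-$ then $a_p\leq k-1$, and the last entry of $a^i$ is $a_p$ when $i<p$ and $a_{p-1}<a_p$ when $i=p$ (or $a^1=*\in N_{0,k}^-$), so in every case $a^i\in N_{p-1}^-$.

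For (ii), well-definedness is the $i=p$ instance of the computation in (i): an $a\in N_p^+$ has the form $(a_1,\dots,a_{p-1},k)$, so $a^p=(a_1,\dots,a_{p-1})$ has all entries $<k$ and lies in $N_{p-1}^-$. For bijectivity I would exhibit the explicit inverse $c=(c_1,\dots,c_{p-1})\mapsto(c_1,\dots,c_{p-1},k)$; this map is well defined into $N_p^+$ because $c_{p-1}<k$ keeps the tuple strictly increasing, and appending $k$ and deleting the last entry are mutually inverse operations on tuples ending in $k$.

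For (iii), the key observation is that $N_{p,k}^-$ and $N_{p,k-1}$ are literally the same set: since $a\in N_{p,k}$ is increasing, the defining condition $a_p<k$ of $N_{p,k}^-$ is equivalent to all entries of $a$ lying in $\{1,\dots,k-1\}$, which is exactly the defining condition of $N_{p,k-1}$. Hence the map $\varphi$ induced by the inclusion $\{1,\dots,k-1\}\hookrightarrow\{1,\dots,k\}$ — i.e. $\varphi(a)=a$ on the level of tuples — is an injection with image precisely $N_{p,k}^-$, as claimed. There is no genuine obstacle in this proposition; the only thing requiring care is keeping the $p=0,1$ conventions consistent throughout, which is why I would settle them at the outset.
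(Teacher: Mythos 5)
Your proposal is correct and follows essentially the same route as the paper: all three parts reduce to the observation that membership in $N_p^\pm$ is detected by the last entry of a strictly increasing tuple, with the explicit inverse $(c_1,\dots,c_{p-1})\mapsto(c_1,\dots,c_{p-1},k)$ for (ii) and the identification of $N_{p,k}^-$ with $N_{p,k-1}$ for (iii). Your extra care with the $p=0,1$ conventions is a reasonable elaboration but not a different argument.
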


    \begin{proof}
        (i) Let $a\in N_p^-$. Since $a_1<\cdots<a_p$ we conclude $a_i<k$ for all $i$. Thus it is not possible for $a^i$ to have a $k$ in its last position.

        (ii) The map $a\mapsto a^p$ is injective since $a,b\in N_p^+$ ensures they agree in the last position and $a^p=b^p$ ensures they agree everywhere else. Surjectivity is also immediate since $b\in N_{p-1}^-$ implies $b_i<k$ for all $i$. Thus, $(b_1,\cdots,b_{p-1},k)$ will map to $b$. 

        (iii) After observing that this map is well defined, bijectivity can be reason quickly from the definition of $N_{p,k}^-$.
        \qedhere
    \end{proof}

    We now use this information to re-index the image of \eqref{evans-differential}.

    \begin{align}
         \bigoplus_{b\in N_{p-1}^+} \sum_{a\in N_{p}} \sum_{i=1}^p (-1)^{i+1} \delta_{b,a^i}B_{a_i} (m_a)  =& \bigoplus_{b\in N_{p-1}^+} \sum_{a\in N_{p}^+} \sum_{i=1}^p (-1)^{i+1} \delta_{b,a^i}B_{a_i} (m_a) \label{(i)}\\
        \bigoplus_{b\in N_{p-1}^-} \sum_{a\in N_{p}} \sum_{i=1}^p (-1)^{i+1} \delta_{b,a^i}B_{a_i} (m_a) =& \bigoplus_{b\in N_{p-1}^-} (-1)^{p+1}B_k (m_{\psi^{-1}(b)}) \notag\\ &\bigoplus_{b\in N_{p-1}^-} \left(\sum_{a\in N_{p}^-} \sum_{i=1}^p (-1)^{i+1} \delta_{b,a^i}B_{a_i} (m_a)\right) \label{(ii)}
    \end{align}

    Equalities \eqref{(i)} and \eqref{(ii)} come from Prop \ref{bij} (i) and (ii) respectively.   
    
    Notice that our image has split neatly into three pieces. These three pieces will become the blocks of our matrix. Before writing that down explicitly, we notice a recursive relationship between $\Lambda$ and the pullback, $\iota^*_{k-1}(\Lambda)$.

    \begin{prop}
        Suppose $\Lambda$ is a source-free, row-finite $k$-graph. Define the subgraphs $\Lambda_j:=\iota_j^*(\Lambda)$. Call the Evans chain complexes of these subgraphs $\calD^j$ with differentials $\partial_p^j$. The differentials of the Evans chain complex on $\Lambda_{k-1}$ naturally map $\partial_p^{k-1}: \bigoplus_{N_{p,k}^-} \Z\Lambda^0 \to \bigoplus_{N_{p-1,k}^-} \Z\Lambda^0$ and $\partial_{p-1}^{k-1}: \bigoplus_{N_{p,k}^+} \Z\Lambda^0 \to \bigoplus_{N_{p-1,k}^+} \Z\Lambda^0$.
    \end{prop}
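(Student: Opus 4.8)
The plan is to show that the Evans chain complex of $\Lambda_{k-1} = \iota_{k-1}^*(\Lambda)$ has, in each degree $p$, chain groups $\calD^{k-1}_p \cong \bigoplus_{a \in N_{p,k-1}} \Z\Lambda^0$, and that via the bijection $\varphi : N_{p,k-1} \to N_{p,k}^-$ of Proposition \ref{bij}(iii) these can be relabeled by index tuples in $N_{p,k}^-$; then one checks the differential $\partial_p^{k-1}$ matches the formula \eqref{evans-differential} restricted to these indices. A separate (purely bookkeeping) point is that $\varphi$ together with the shift bijection $\psi: N_{p,k}^+ \to N_{p-1,k}^-$ lets us reindex $\calD^{k-1}$ a second time, now by tuples in $N_{p,k}^+$, shifting degrees by one; this gives the second claimed map $\partial_{p-1}^{k-1}: \bigoplus_{N_{p,k}^+} \Z\Lambda^0 \to \bigoplus_{N_{p-1,k}^+} \Z\Lambda^0$.

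First I would unwind the definition of the pullback (Definition \ref{pullback}) for $f = \iota_{k-1}: \N^{k-1} \to \N^k$: the graph $\Lambda_{k-1}$ has the same object set $\Lambda^0$ as $\Lambda$, and its coordinate graphs are $\kappa_i^*(\Lambda_{k-1}) = \kappa_i^*(\Lambda)$ for $1 \le i \le k-1$, so $M_i(\Lambda_{k-1}) = M_i(\Lambda)$ and hence $B_i(\Lambda_{k-1}) = B_i(\Lambda)$ for all $i \le k-1$. In particular $\Z\Lambda_{k-1}^0 = \Z\Lambda^0$. Then, applying Theorem \ref{evans-comp} to the $(k-1)$-graph $\Lambda_{k-1}$, the chain group in degree $p$ is $\calD^{k-1}_p \cong \bigoplus_{a \in N_{p,k-1}} \Z\Lambda^0$ for $0 \le p \le k-1$ and $0$ otherwise, with differential $\bigoplus_{a\in N_{p,k-1}} m_a \mapsto \bigoplus_{b\in N_{p-1,k-1}} \sum_{a} \sum_i (-1)^{i+1}\delta_{b,a^i} B_{a_i}(m_a)$.

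Next I would transport this along $\varphi$. Since $\varphi: N_{p,k-1} \to N_{p,k}^-$ is a bijection (Proposition \ref{bij}(iii)) compatible with the face maps — i.e. $\varphi(a)^i = \varphi(a^i)$, which is immediate because $\varphi$ is just the inclusion of tuples and taking a face commutes with it — and since for $a \in N_{p,k-1}$ every entry $a_i$ satisfies $a_i \le k-1$ so that $B_{a_i}$ is an honest co-adjacency matrix of $\Lambda$, the differential formula is literally unchanged under the relabeling. This identifies $\calD^{k-1}_p$ with $\bigoplus_{a \in N_{p,k}^-}\Z\Lambda^0$ and $\partial_p^{k-1}$ with the displayed map $\bigoplus_{N_{p,k}^-}\Z\Lambda^0 \to \bigoplus_{N_{p-1,k}^-}\Z\Lambda^0$; this is exactly the third summand appearing in \eqref{(ii)}. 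For the second assertion, I compose further with $\psi^{-1}: N_{p-1,k}^- \to N_{p,k}^+$, which is a bijection by Proposition \ref{bij}(ii); this reindexes the degree-$p$ group of $\calD^{k-1}$ — wait, more precisely it reindexes $\calD^{k-1}_{p-1} \cong \bigoplus_{N_{p-1,k-1}}\Z\Lambda^0 \cong \bigoplus_{N_{p-1,k}^-}\Z\Lambda^0 \cong \bigoplus_{N_{p,k}^+}\Z\Lambda^0$, so the differential $\partial^{k-1}_{p-1}$ becomes a map $\bigoplus_{N_{p,k}^+}\Z\Lambda^0 \to \bigoplus_{N_{p-1,k}^+}\Z\Lambda^0$, as claimed.

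The only real content — and the step I expect to require the most care — is verifying that the face maps intertwine with $\varphi$ and $\psi$ exactly as needed, and bookkeeping the degree shift so that the signs $(-1)^{i+1}$ still line up after composing with $\psi$ (here one uses that $\psi$ deletes the last coordinate, which is position $p$, so it does not disturb the relative order of the first $p-1$ entries and hence does not introduce sign changes in the remaining face maps). Everything else is a direct substitution into the definitions of Theorem \ref{evans-comp} and Proposition \ref{bij}. I would present this as two short paragraphs: one establishing $M_i(\Lambda_{k-1}) = M_i(\Lambda)$ and applying Theorem \ref{evans-comp}, and one doing the two relabelings via $\varphi$ and $\psi$ with the sign check.
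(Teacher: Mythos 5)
Your proposal is correct and follows essentially the same route as the paper: re-index the Evans differential of $\calD^{k-1}$ along the bijection $\varphi$ of Proposition \ref{bij}(iii) to get the first map, then compose with $\psi^{-1}$ (which preserves the relative order of the surviving coordinates, hence the signs) to get the second. You simply make explicit two points the paper leaves implicit — that the pullback $\iota_{k-1}^*(\Lambda)$ has the same vertex set and co-adjacency matrices $B_1,\dots,B_{k-1}$ as $\Lambda$, and that $\varphi$ commutes with the face maps — which is a reasonable amount of added care.
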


    \begin{proof}
        Since the map of Prop \ref{bij} (iii) is bijective, we may simply re-index \eqref{evans-differential}.
        \begin{align*}
            \bigoplus_{a\in N_{p,k-1}} m_a &\mapsto \bigoplus_{b\in N_{p-1,k-1}} \sum_{a\in N_{p,k-1}} \sum_{i=1}^p (-1)^{i+1} \delta_{b,a^i} B_{a_i} (m_a) \\
            \bigoplus_{\varphi(a)\in N_{p,k}^-} m_a &\mapsto \bigoplus_{\varphi(b)\in N_{p-1,k}^-} \sum_{\varphi(a)\in N_{p,k}^-} \sum_{i=1}^p (-1)^{i+1} \delta_{b,a^i}B_{a_i} (m_a) 
        \end{align*}

        The mapping of $\bigoplus_{N_{p,k}^+} \Z\Lambda^0$ by $\partial_{p-1}^{k-1}$ follows directly from $\psi$ preserving the positioning of coordinates when mapping $N_{p,k}^+ \to N_{p-1,k}^-$.
    \end{proof}

    Using the above proposition we may write the image of $\partial^k_p$ as
    
    \[ \partial_{p-1}^{k-1}\left(\bigoplus_{a\in N_{p-1}^+} m_a\right) + \left(\bigoplus_{b\in N_{p-1}^-} (-1)^{p+1} B_k (m_{\psi^{-1}(b)})\right) + \partial_p^{k-1}\left( \bigoplus_{a\in N_{p,k}^-} m_a\right). \]

    Our main theorem organizes all of this information into the well studied perspective of block matrices.

    \begin{thm}
    \label{block-mat}
        For a source-free row finite $k$-graph, $\Lambda$. Let $\calD^j$ denote the Evans chain complex \eqref{evans-chain-comp} for $\Lambda_j$. The differentials of $\calD^i$ have the form:
        \[\partial^j_p = \begin{bmatrix} \partial^{j-1}_{p-1} & 0 \\ (-1)^{p+1} B_j & \partial^{j-1}_{p} \end{bmatrix}\]
        with $\partial_p^\ell$ which are not well defined (i.e. $\ell>p$) omitted. Here, $B_i$ acts coordinate-wise on vectors.
    \end{thm}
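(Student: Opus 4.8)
The plan is to prove Theorem \ref{block-mat} by induction on $j$, reading off the block structure directly from the three-term decomposition of the image of $\partial_p^k$ that was established immediately before the statement. The base case $j=1$ is trivial: $N_{p,1}$ is empty for $p\geq 2$ and a singleton for $p\in\{0,1\}$, so $\calD^1$ is the two-term complex $\Z\Lambda^0 \xrightarrow{B_1} \Z\Lambda^0$, which matches the claimed $1\times 1$ ``matrix'' $\partial_1^1 = [\,(-1)^{2}B_1\,] = [B_1]$ with the $\partial^0_{\bullet}$ entries omitted as not well defined.

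For the inductive step, I would fix $j$ and use the ordered direct sum decomposition $\bigoplus_{a\in N_{p,j}}\Z\Lambda^0 = \bigl(\bigoplus_{a\in N_{p,j}^+}\Z\Lambda^0\bigr) \oplus \bigl(\bigoplus_{a\in N_{p,j}^-}\Z\Lambda^0\bigr)$, writing the ``$+$'' summand first (top) and the ``$-$'' summand second (bottom), both in the domain (degree $p$) and codomain (degree $p-1$). Using the bijections $\psi\colon N_{p,j}^+\to N_{p-1,j}^-$ and $\varphi\colon N_{p,j-1}\to N_{p,j}^-$ from Proposition \ref{bij}, together with the identification just derived that $\partial^{j-1}_{p-1}$ governs the $N^+$-indexed coordinates and $\partial^{j-1}_p$ the $N^-$-indexed coordinates, I would identify each of the four blocks of $\partial_p^j$ with respect to this splitting:
\begin{itemize}
\item the top-left block sends the $+$ part of degree $p$ to the $+$ part of degree $p-1$; by equality \eqref{(i)} and the preceding proposition this is exactly $\partial^{j-1}_{p-1}$ (under $\psi$);
\item the top-right block sends the $-$ part of degree $p$ to the $+$ part of degree $p-1$; since for $a\in N_{p,j}^-$ every $a^i$ again lies in $N_{p-1,j}^-$ (Proposition \ref{bij}(i)), no terms land in the $+$ part, so this block is $0$;
\item the bottom-left block sends the $+$ part of degree $p$ to the $-$ part of degree $p-1$; by the first line of \eqref{(ii)} this is the ``diagonal'' map $m_{\psi^{-1}(b)}\mapsto (-1)^{p+1}B_j(m_{\psi^{-1}(b)})$, i.e.\ multiplication by $(-1)^{p+1}B_j$ acting coordinate-wise;
\item the bottom-right block sends the $-$ part to the $-$ part; by the second line of \eqref{(ii)} and the preceding proposition this is $\partial^{j-1}_p$.
\end{itemize}
Assembling these four blocks in the stated order yields precisely $\partial^j_p = \begin{bmatrix} \partial^{j-1}_{p-1} & 0 \\ (-1)^{p+1}B_j & \partial^{j-1}_p \end{bmatrix}$, completing the induction.

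The only genuine subtlety — and the step I would be most careful about — is bookkeeping the edge cases where some block is not well defined, so that the matrix is interpreted correctly. When $p=k$ (or more generally $p=j$), $N_{p,j}^-$ is empty (every strictly increasing length-$p$ tuple in $\{1,\dots,j\}$ must end in $j$), so the domain has only a $+$ part and the right column disappears; when $p>j$ everything is zero; and $\partial^{j-1}_p$ is omitted exactly when $p>j-1$, i.e.\ $p=j$, which is consistent with $N_{p,j}^-=\varnothing$. I would state explicitly that the convention ``omit $\partial_p^\ell$ when $\ell>p$'' is forced by the corresponding index set being empty, so no separate argument is needed — the formula is uniform once one adopts the convention that a direct sum over the empty set is $0$. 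A secondary point worth a sentence is that the sign $(-1)^{p+1}$ is the same $p$ (the domain degree) throughout, matching the factor appearing in \eqref{(ii)}; there is no shift because $\psi$ preserves coordinate positions, as already noted in the proof of the preceding proposition.
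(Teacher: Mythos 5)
Your proposal is correct and follows essentially the same route as the paper: the paper's proof of Theorem \ref{block-mat} simply invokes the re-indexing of Proposition \ref{bij} and the two displayed equalities \eqref{(i)}--\eqref{(ii)}, which is exactly the four-block identification you carry out in detail (your induction wrapper is harmless but not really needed, since the block decomposition of $\partial_p^j$ relative to $\partial^{j-1}$ never uses the inductive hypothesis). Your handling of the degenerate blocks via $N_{p,j}^-=\varnothing$ when $p=j$ is a welcome clarification of the paper's omission convention.
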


    \begin{proof}
        The proof is simply an application of the re-indexing done in earlier propositions combined with the definition of block matrices. \qedhere
    \end{proof}

    Before moving on, we will look at some of the differentials for a $4$-graph as an example to better visualize the pattern of these matrices.

        \begin{figure}[h]
        \centering
        \stepcounter{thm}

     $$\begin{array}{c|ccc:c}
		 &(2,3,4) & (1,3,4) & (1,2,4)& (1,2,3)  \\
		\hline
		(3,4)& B_2 & B_1 & 0 & 0 \\
		(2,4)& -B_3 & 0 & B_1 & 0 \\ 
		(1,4)& 0 & -B_3 &  B_2 & 0 \\ \hdashline
		(2,3) & B_4 & 0 & 0 & B_1\\
		(1,3) & 0 & B_4 & 0 & -B_2\\
		(1,2) & 0 & 0 & B_4 & B_3
	\end{array} \hspace{24pt} \begin{array}{c|c}
		 &(1,2,3,4)  \\
		\hline
		(2,3,4)& B_1 \\
		(1,3,4)& -B_2 \\
		(1,2,4)& B_3 \\
		(1,2,3) & -B_4\\

	\end{array}$$

        \caption{The differential depicted are $\partial_3^4$ and $\partial_4^4$ respectively. Each position is made zero if there is no way to delete an element of the column index and obtain the row index ($\delta_{b,a^i}$). If an even position needs to be deleted, a negative is added ($(-1)^{i+1}$). Finally, whatever element was deleted determines which $B_i$ is put in the slot ($B_{a_i}$). }
        \label{diff-fig}
    \end{figure}

 \begin{thm}
    \label{inv-k-thry}
        If $\Lambda$ is a source-free, row-finite, $k$-graph such that $B_i$ is bijective for some $1\leq i \leq k$, then $K_*(C^*(\Lambda))\cong 0$.
    \end{thm}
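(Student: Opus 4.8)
The plan is to deduce the statement from the Evans spectral sequence of Theorem \ref{evans-comp}: it suffices to show that the Evans chain complex $\calD$ for $\Lambda$ is acyclic, i.e.\ $H_p(\calD)=0$ for all $p$. Indeed, if all $H_p(\calD)$ vanish then $E^2_{p,q}=0$ for every $p,q$, hence every later page vanishes and $E^\infty=0$. Since $E^2_{p,q}=0$ unless $0\le p\le k$, the filtration of each $K_n(C^*(\Lambda))$ afforded by convergence is bounded, so $K_n(C^*(\Lambda))$ is assembled from finitely many successive quotients $E^\infty_{p,q}$, all of which are $0$; therefore $K_*(C^*(\Lambda))\cong 0$. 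So the whole problem reduces to acyclicity of $\calD$.

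To prove acyclicity I would first reduce to the case $i=k$. Relabelling the coordinates of $\N^k$ by a permutation replaces $\Lambda$ by an isomorphic $k$-graph with the same $C^*$-algebra, and merely permutes the co-adjacency matrices $B_1,\dots,B_k$ and correspondingly reindexes $\calD$ (so its homology is unchanged); hence we may assume $B_k$ is bijective. Now $\calD=\calD^k$, and by Theorem \ref{block-mat}, $\calD^k_p=\calD^{k-1}_{p-1}\oplus\calD^{k-1}_p$ with $\partial^k_p=\begin{bmatrix}\partial^{k-1}_{p-1} & 0 \\ (-1)^{p+1}B_k & \partial^{k-1}_p\end{bmatrix}$, where $B_k$ acts coordinate-wise on $\calD^{k-1}_p=\bigoplus_{N_{p,k-1}}\Z\Lambda^0$. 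Expanding the identity $(\partial^k)^2=0$ in block form, the diagonal blocks vanish automatically because $\calD^{k-1}$ is a chain complex, while the lower-left block equals $\pm(B_k\partial^{k-1}-\partial^{k-1}B_k)$; since $\calD^k$ really is a complex, this block vanishes, i.e.\ $B_k$ commutes with $\partial^{k-1}$. Thus $B_k$ is a chain endomorphism of $\calD^{k-1}$, and the block formula exhibits $\calD^k$ — up to the harmless sign on the diagonal — as the algebraic mapping cone of $B_k\colon\calD^{k-1}\to\calD^{k-1}$; equivalently, there is a short exact sequence of complexes $0\to\calD^{k-1}\to\calD^k\to\calD^{k-1}[1]\to 0$ (inclusion of the second summand, then projection onto the first) whose connecting homomorphism is $\pm(B_k)_*$.

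Next I would use that $B_k=I-M_k^T$ is a bijective endomorphism of $\Z\Lambda^0$. Acting coordinate-wise, $B_k$ is then bijective in every degree of $\calD^{k-1}$, so it is a chain isomorphism of $\calD^{k-1}$ and $(B_k)_*$ is an isomorphism on $H_p(\calD^{k-1})$ for every $p$. Feeding this into the long exact homology sequence of the short exact sequence above, each $H_p(\calD^k)$ fits into $\coker\!\big((B_k)_*\colon H_p(\calD^{k-1})\to H_p(\calD^{k-1})\big)\hookrightarrow H_p(\calD^k)\twoheadrightarrow\ker\!\big((B_k)_*\colon H_{p-1}(\calD^{k-1})\to H_{p-1}(\calD^{k-1})\big)$, and both outer groups vanish because $(B_k)_*$ is an isomorphism. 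Hence $H_p(\calD^k)=0$ for all $p$, $\calD$ is acyclic, and the argument is complete.

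I do not anticipate a genuine obstacle; the points needing care are (a) the relabelling reduction — verifying that permuting coordinates leaves $C^*(\Lambda)$ isomorphic and only reindexes $\calD$; (b) the sign bookkeeping needed to legitimately regard the block complex of Theorem \ref{block-mat} as a mapping cone, so that the mapping-cone long exact sequence (with connecting map $\pm(B_k)_*$) applies; and (c) making precise that convergence of the Evans spectral sequence together with $E^\infty=0$ forces the limit groups to vanish, which uses that the filtration is bounded — guaranteed here because $E^2$ is concentrated in $0\le p\le k$. Each of these is routine but should be stated rather than skipped.
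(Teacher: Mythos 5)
Your proposal is correct, and it reaches the conclusion by a genuinely different route from the paper. The paper argues degree by degree: after the same reduction to the case that $B_k$ is bijective, it takes the block form of Theorem \ref{block-mat}, left-multiplies $\partial^k_p$ by an invertible elementary block matrix to compute $\ker(\partial^k_p)$ explicitly (using that the co-adjacency matrices commute), parametrizes that kernel as $\bigl((-1)^{p+2}B_k^{-1}\partial^{k-1}_p\beta,\;\beta\bigr)$, and exhibits an explicit $\partial^k_{p+1}$-preimage of each such element; the boundary degrees $p=1$ and $p=k$ are handled separately in Lemma \ref{spec-cases}. You instead observe that the block form exhibits $\calD^k$ (up to signs) as the mapping cone of $B_k$ viewed as a chain endomorphism of $\calD^{k-1}$ --- equivalently, that there is a short exact sequence of complexes $0\to\calD^{k-1}\to\calD^k\to\calD^{k-1}[1]\to 0$ with connecting homomorphism $\pm(B_k)_*$ --- and conclude acyclicity from the long exact homology sequence. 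I checked the points you flagged: the inclusion of the second summand and projection onto the first are indeed chain maps for the given block differential, the snake-lemma computation does identify the connecting map as $(-1)^{p+1}(B_k)_*$, and the fact that $B_k$ commutes with $\partial^{k-1}$ follows either from your $(\partial^k)^2=0$ expansion or from the commutation of adjacency matrices that the paper cites. Your approach buys uniformity (no special cases at $p=1$ and $p=k$, so Lemma \ref{spec-cases} becomes unnecessary) and isolates the true hypothesis --- only $(B_k)_*$ being an isomorphism on $H_*(\calD^{k-1})$ is needed, not bijectivity of $B_k$ itself --- at the cost of invoking the mapping-cone formalism; the paper's computation is more elementary and produces explicit preimages. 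Both handle the passage from $E^2=0$ to $K_*=0$ the same way, via boundedness of the filtration.
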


    \begin{proof}
        Since $\Lambda$ is source free and row finite, there exists a spectral sequence $E,d$ with $E^2$ page given by the homology groups of the Evans chain complex. Further, there exists a pullback $f^*(\Lambda)$ which is isomorphic to $\Lambda$ and swaps $B_i$ with $B_k$. Thus, without loss of generality, we suppose that $B_k$ is bijective. 

        We will demonstrate $\im(\partial_{p+1})=\ker(\partial_p)$ for $1<p<k$. The special cases of $p=1$ and $p=k$ are not particularly illuminating, but in the interest of completeness we include them in Lemma \ref{spec-cases}.
        
        Since it is already known that $\im(\partial_{p+1})\subseteq\ker(\partial_p)$, we will demonstrate the reverse inclusion. Since $B_k$ is injective, it is relatively straightforward to check that
        \[ \ker \left( \begin{bmatrix}  (-1)^{p-1}B_k & \partial_{p-1}^{k-1} \\ 0 & I \end{bmatrix} \begin{bmatrix} \partial^{k-1}_{p-1} & 0 \\ (-1)^{p+1} B_k & \partial_p^{k-1}\end{bmatrix} \right) = \ker\left( \begin{bmatrix} -\partial^{k-1}_{p-1} & 0 \\ (-1)^{p+1} B_k & \partial_p^{k-1}\end{bmatrix} \right).\]

       Since the adjacency matrices commute \cite[$\S 6$]{kp}, we see that the left hand set simplifies to $$\ker\left(\begin{bmatrix} 0 & 0 \\ (-1)^{p+1}B_k &\partial_p^{k-1}\end{bmatrix}\right).$$ This is exactly the set $\{ (\alpha,\beta): (-1)^{p+1}B_k(\alpha)=-\partial_p^{k-1}(\beta) \}$. Since $B_k^{-1}$ exists, we may instead 
       write $((-1)^{p+2}B_k^{-1}\partial_p^{k-1}(\beta), \beta)$.

        Lastly, we note that $B^{-1}_k(\beta) \in \bigoplus_{N_p^-}(\Z\Lambda^0) = \bigoplus_{N_{p+1}^+}(\Z\Lambda^0)$. In particular,
        \[\begin{bmatrix} \partial^{k-1}_{p} & 0 \\ (-1)^{p+2} B_k & \partial_{p+1}^{k-1}\end{bmatrix} \begin{bmatrix}  B^{-1}_k(\beta)\\ 0\end{bmatrix} = \begin{bmatrix} (-1)^{p+2}B_k^{-1}\partial_p^{k-1}(\beta)\\ \beta\end{bmatrix} = \begin{bmatrix} \alpha \\ \beta\end{bmatrix}.\]
        We conclude that $\ker(\partial_p) = \im(\partial_{p+1})$. This implies that $H_p(\calD)\cong 0$.

        Since $0$ has no nontrivial quotients, we conclude that $E^{\infty}_{p,q}\cong 0$ for all $p,q$ in the Kasparov-Schochet spectral sequence. Then looking at the definition convergence, we conclude that the only family $H_*$ such a spectral sequence could converge to is the trivial family $H_*\cong 0$. Since $E\implies K_*(C^*(\Lambda))$, we deduce that $K_*(C^*(\Lambda))\cong 0$. \qedhere
    \end{proof}

Our final result concerns tensor chain complexes. We begin by defining tensor complexes and explicitly stating their differential maps. We will then leverage the similarities between these maps and the Evans differentials.

\begin{dfn}
\label{chain-tens}
    Given two chain complexes of $R$-modules $\calA$ and $\calC$ we construct the chain complex $\calA\otimes\calC$ as follows:
    \[ (\calA\otimes\calC)_n = \bigoplus_{i+j=n} \calA_i \otimes_R \calC_j \]
    The differential is then defined on elementary tensors $a_i\otimes c_j\in \calA_i\otimes\calC_j$.
    \[\partial^\otimes(a_i\otimes c_j) = \partial_i(a_i)\otimes_R c_j \oplus (-1)^ia_i\otimes_R \partial_j(c_j)\]
\end{dfn}

\begin{thm}
\label{tensor-result}
    For a nontrivial monoid $k$-graph, $\lambda$ (i.e. $|\Lambda^0|=1$ and $B_i>0$ for some $1\leq i \leq k$), let $\calD$ denote the Evans chain complex. If $\calC^j$ denotes the chain complex $\calC^j: 0 \to \Z \overset{B_j}{\to} \Z \to 0$, then
    \[\calD \cong \bigotimes_{j=1}^k \calC^j\]
    
\end{thm}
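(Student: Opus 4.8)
The plan is to prove the isomorphism by induction on $k$, leaning on the recursive block-matrix form of the Evans differentials from Theorem~\ref{block-mat}. I would begin by recording the simplifications forced by the one-vertex hypothesis: since $|\Lambda^0|=1$ we have $\Z\Lambda^0\cong\Z$, each adjacency matrix $M_j$ is a single nonnegative integer, and each co-adjacency matrix $B_j=I-M_j^T$ is multiplication by the integer $1-M_j$; thus $\calC^j$ is genuinely the two-term complex $0\to\Z\xrightarrow{B_j}\Z\to 0$ of free rank-one $\Z$-modules. I would also note that every pullback $\Lambda_j=\iota_j^*(\Lambda)$ is again a one-vertex graph (pullback does not change the object set), so the complexes $\calD^j$ of Theorem~\ref{block-mat} are available and $\calD^k\cong\calD$, and that $\bigotimes_{j=1}^k\calC^j$ is unambiguous because the tensor product of chain complexes of Definition~\ref{chain-tens} is associative up to natural isomorphism — I will read it as $\bigl(\bigotimes_{j=1}^{k-1}\calC^j\bigr)\otimes\calC^k$.

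The base case $k=1$ is immediate: from \eqref{evans-chain-comp} and \eqref{evans-differential}, using $N_{0,1}=\{*\}$ and $N_{1,1}=\{(1)\}$, the complex $\calD^1$ is concentrated in degrees $0$ and $1$ with $\calD^1_0=\calD^1_1=\Z\Lambda^0$ and $\partial_1=B_1$, so $\calD^1\cong\calC^1$ after identifying $\Z\Lambda^0\cong\Z$. For the inductive step, assume $\calD^{k-1}\cong\bigotimes_{j=1}^{k-1}\calC^j$; tensoring this isomorphism with $\mathrm{id}_{\calC^k}$ reduces the claim to showing $\calD^k\cong\calD^{k-1}\otimes\calC^k$. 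I would compute the differential of $\calD^{k-1}\otimes\calC^k$ straight from Definition~\ref{chain-tens}. In degree $p$,
\[ (\calD^{k-1}\otimes\calC^k)_p=\bigl((\calD^{k-1})_{p-1}\otimes\calC^k_1\bigr)\oplus\bigl((\calD^{k-1})_p\otimes\calC^k_0\bigr)\cong(\calD^{k-1})_{p-1}\oplus(\calD^{k-1})_p, \]
and the Leibniz rule $\partial^\otimes(a\otimes c)=\partial(a)\otimes c\oplus(-1)^{|a|}a\otimes\partial(c)$, applied to the generators $x\otimes 1\in(\calD^{k-1})_{p-1}\otimes\calC^k_1$ and $y\otimes 1\in(\calD^{k-1})_p\otimes\calC^k_0$ and using $\partial^{\calC^k}_1=B_k$, $\partial^{\calC^k}_0=0$, yields the block form
\[ \partial^\otimes_p=\begin{bmatrix}\partial^{k-1}_{p-1} & 0 \\ (-1)^{p+1}B_k & \partial^{k-1}_p\end{bmatrix}, \]
the sign $(-1)^{p+1}=(-1)^{p-1}$ being exactly the Koszul sign attached to $x\in(\calD^{k-1})_{p-1}$, and the entries involving $\partial^{k-1}_p$ with $p>k-1$ being absent in the evident way. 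By Theorem~\ref{block-mat}, together with the identification of the splitting $\bigoplus_{N_{p,k}^+}\Z\Lambda^0\oplus\bigoplus_{N_{p,k}^-}\Z\Lambda^0$ with $(\calD^{k-1})_{p-1}\oplus(\calD^{k-1})_p$ coming from the bijections $\psi$ and $\varphi$ of Proposition~\ref{bij}, the differential $\partial^k_p$ is given by the same block matrix; hence $\calD^k\cong\calD^{k-1}\otimes\calC^k$, closing the induction.

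The step demanding the most care is the last one: one must check that transporting the block structure along $\psi$ and $\varphi$ respects the differentials (not merely the graded modules), i.e. that the degreewise isomorphism is genuinely a chain map — but this is precisely what the re-indexing carried out just before Theorem~\ref{block-mat} establishes, so with that theorem in hand the verification is bookkeeping. An alternative route that avoids the recursion, at the price of a longer sign computation, is to identify the standard basis of $\bigl(\bigotimes_{j=1}^k\calC^j\bigr)_p$ with the $p$-subsets $\{a_1<\cdots<a_p\}$ of $\{1,\dots,k\}$ — that is, with $N_{p,k}$ — and to check directly from the iterated Leibniz rule that the basis vector indexed by $a$ maps to $\sum_{i=1}^p(-1)^{i+1}B_{a_i}$ times the basis vector indexed by $a^i$, the sign arising as the parity of $\#\{\ell:a_\ell<a_i\}=i-1$; this reproduces \eqref{evans-differential} verbatim. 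Either way the sole content is the signs, which Theorem~\ref{block-mat} has already organized.
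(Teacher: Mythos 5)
Your proposal is correct and follows essentially the same route as the paper: induction on $k$ reducing to $\calD^k\cong\calD^{k-1}\otimes\calC^k$, identification of the graded pieces via the bijections $\varphi$ and $\psi$ of Proposition \ref{bij}, and a comparison of the Koszul sign $(-1)^{p-1}$ with the block-matrix sign $(-1)^{p+1}$ from Theorem \ref{block-mat}. The only cosmetic differences are that you make the base case and the associativity of $\otimes$ explicit, whereas the paper relegates the boundary degrees $p=1,k$ to a separate lemma.
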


\begin{proof}
    Since $k<\infty$, it is enough to show that for $\Lambda_{k-1}:=\iota_{k-1}^*(\Lambda)$ with $\calD^{k-1}$ the Evans chain complex for $\Lambda_{k-1}$ we have $\calD^k\cong \calD^{k-1} \otimes \calC$ with $\calC: 0 \to \Z \overset{B_k}{\to} \Z \to 0$.

    First, we investigate the modules of the complex $\calD^{k-1} \otimes \calC$. Using Definition \ref{chain-tens} we have
    \[ (\calA\otimes\calC)_p =  \calD^{k-1}_{p} \otimes_\Z \Z \oplus \calD^{k-1}_{p-1} \otimes_\Z \Z \cong \calD_p^{k-1}\oplus \calD_{p-1}^{k-1} \cong \calD_p^k.\]
    The first isomorphism is the natural tensor isomorphism given by multiplication. The second is the re-indexing bijections $\varphi: N_{p,k-1} \to N_{p,k}^-$ and $\psi^{-1}: N_{p-1,k}^- \to N_{p,k}^+$.

    It remains to show that these isomorphisms, $\Psi_p$, are chain maps, that is, they commute with the differentials. To avoid ambiguity, we will denote the tensor differential $\partial^\otimes$ and the Evans differential $\partial$. Again, we will treat the cases of $p=1$ and $p=k$ separately since those differentials have a somewhat unique shape (see Lemma \ref{spec-tens}).
    
    Let $1<p<k$. The map $\partial_p^k\Psi_p$ acts on elements $\alpha \otimes x \oplus \beta \otimes y$ with $\alpha\in \calD_{p-1}^{k-1}$ and $\beta\in \calD_{p}^{k-1}$ in the following way.
\[
        \partial_p^k\Psi_p(\alpha \otimes x \oplus \beta \otimes y) = \partial_p^k\begin{bmatrix} x\cdot \alpha \\ y\cdot\beta\end{bmatrix}   
         = \begin{bmatrix} \partial_{p-1}^{k-1} & 0 \\ (-1)^{p+1} B_k & \partial_p^{k-1}\end{bmatrix}\begin{bmatrix} x\cdot \alpha \\ y\cdot\beta\end{bmatrix}  
         = \begin{bmatrix} x\cdot \partial_{p-1}^{k-1}\alpha \\  x(-1)^{p+1}B_k\alpha+ y\partial_p^{k-1}\beta\end{bmatrix} 
\]

Before moving on to the alternative ordering, we recall that $\partial^\calC_0=0$ and $\partial^\calC_1=B_k$. This will be used to simplify $\partial^\otimes$ as we go.

\begin{align*}
     \Psi_{p-1}\partial_p^\otimes(\alpha \otimes x \oplus \beta \otimes y)&=  \Psi_{p-1} (\partial_{p-1}^{k-1}(\alpha) \otimes x \oplus (-1)^{p-1} \alpha \otimes B_k x + \partial_{p}^{k-1}(\beta) \otimes y) \\
    &= \begin{bmatrix}
        x\cdot \partial_{p-1}^{k-1}(\alpha) \\ (-1)^{p-1}B_k x \cdot \alpha  
    \end{bmatrix} + \begin{bmatrix}
        0 \\ y\cdot \partial_{p}^{k-1}(\beta)
    \end{bmatrix}
\end{align*}
Finally, since the parity of $p-1$ and $p+1$ align we may conclude that these maps are equal. \qedhere
    
\end{proof}

The benefit to phrasing the Evans complex in terms of a tensor complex is that it allows us to invoke the K\"unneth theorem

\begin{thm}
\label{kunneth}
    \textbf{K\"{u}nneth Theorem: } \cite[cf$\sim$Theorem 3.6.3]{weibel} For a chain complex, $\calC$, with $\calC_p$ and $\partial_\calC(\calC_p)$ flat for all $p$,\footnote{Some authors will substitute $\calC$ projective to simplify the initial conditions, but this is not equivalent} and an arbitrary complex $\calA$ there exists a short exact sequence
    \[ \bigoplus_{i+j=p} H_i(\calC)\otimes_R H_j(\calA) \to H_p(\calC\otimes_R\calA) \to \bigoplus_{i+j=p-1}\operatorname{Tor}_1^R(H_{i}(\calC),H_j(\calA))\]
    which splits unnaturally.
\end{thm}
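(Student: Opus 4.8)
This statement is, up to the clause recording the splitting, essentially \cite[Theorem~3.6.3]{weibel}, so the plan is to recall the standard homological-algebra argument and then pin down where the splitting comes from. Write $Z_p=\ker(\partial_\calC|_{\calC_p})$ and $B_p=\partial_\calC(\calC_{p+1})$ for the cycle and boundary submodules of $\calC$. The first point is that the cycles are automatically flat as well: in the short exact sequence $0\to Z_p\to\calC_p\to B_{p-1}\to 0$ both $\calC_p$ and $B_{p-1}=\partial_\calC(\calC_p)$ are flat, so dimension shifting gives $\operatorname{Tor}^R_n(Z_p,-)\cong\operatorname{Tor}^R_{n+1}(B_{p-1},-)=0$ for $n\ge 1$. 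View $Z=(Z_p)_p$ and the shift $B[-1]=(B_{p-1})_p$ as chain complexes with zero differential; then $\partial_\calC$, corestricted to its image, yields a short exact sequence of chain complexes $0\to Z\to\calC\to B[-1]\to 0$.

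The next step is to apply $-\otimes_R\calA$. Because each $B_{p-1}$ is flat, this sequence stays exact in every degree, so $0\to Z\otimes_R\calA\to\calC\otimes_R\calA\to B[-1]\otimes_R\calA\to 0$ is again a short exact sequence of complexes. Flatness of the $Z_i$ and $B_i$ lets one identify the outer homologies degreewise, $H_n(Z\otimes_R\calA)\cong\bigoplus_{i+j=n}Z_i\otimes_R H_j(\calA)$ and $H_n(B[-1]\otimes_R\calA)\cong\bigoplus_{i+j=n-1}B_i\otimes_R H_j(\calA)$, and the long exact sequence in homology then collapses to short exact sequences
\[0\to\coker(\delta_{n+1})\to H_n(\calC\otimes_R\calA)\to\ker(\delta_n)\to 0,\]
where the connecting map $\delta$ is, up to Koszul signs, the direct sum of the maps $B_i\otimes_R H_j(\calA)\to Z_i\otimes_R H_j(\calA)$ induced by the inclusions $B_i\into Z_i$. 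Verifying this description of $\delta$ and tracking the signs is the one genuinely computational point. Finally I would run the long exact $\operatorname{Tor}$-sequences of $0\to B_i\to Z_i\to H_i(\calC)\to 0$, in which $\operatorname{Tor}^R_n(B_i,-)=\operatorname{Tor}^R_n(Z_i,-)=0$ for $n\ge 1$ since $B_i$ and $Z_i$ are flat; this identifies $\coker(\delta_{n+1})\cong\bigoplus_{i+j=n}H_i(\calC)\otimes_R H_j(\calA)$ and $\ker(\delta_n)\cong\bigoplus_{i+j=n-1}\operatorname{Tor}^R_1(H_i(\calC),H_j(\calA))$, which is exactly the asserted short exact sequence.

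For the splitting I would invoke the classical device. Choose an $R$-module retraction $\calC_p\onto Z_p$ of the inclusion; this exists as soon as each $B_{p-1}=\partial_\calC(\calC_p)$ is projective, in particular over a principal ideal domain such as $\Z$ with finitely generated modules, which is the only regime in which the theorem is applied here (for the Evans complex every module in sight is a finitely generated free abelian group). Composing with $Z_p\onto H_p(\calC)$, these retractions assemble into a chain map $\calC\to H_*(\calC)$ into the zero-differential complex that induces the identity on homology, which is the starting point for producing a splitting of the first map in the sequence, with the resulting splitting depending on the chosen retractions and hence unnatural. I expect this last paragraph to be the main obstacle: the short exact sequence itself is pure diagram chasing out of flatness, but the splitting genuinely uses projectivity rather than mere flatness of the boundary (and homology) modules, so it should be claimed only in the generality the applications demand.
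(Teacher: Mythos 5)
The paper does not prove this statement at all; it is quoted verbatim (with the splitting clause appended) from Weibel, so there is no in-paper argument to compare against. Your sketch is the standard textbook proof and is correct: flatness of $\calC_p$ and $\partial_\calC(\calC_p)$ forces flatness of the cycles $Z_p$ by dimension shifting, the sequence $0\to Z\to\calC\to B[-1]\to 0$ of complexes stays exact after tensoring with $\calA$, the long exact sequence collapses because $Z$ and $B[-1]$ have zero differential, and the $\operatorname{Tor}$ long exact sequence of $0\to B_i\to Z_i\to H_i(\calC)\to 0$ identifies the cokernel and kernel of the connecting map as the two outer terms. Your caveat about the splitting is also well taken: Weibel's Theorem 3.6.3 under mere flatness does not assert a splitting, and the retraction $\calC_p\onto Z_p$ you need really does require projectivity of the boundaries; this is harmless here since in Theorem \ref{tensor-result} every module is a finitely generated free abelian group, but it is an honest discrepancy between the hypotheses as stated and the conclusion "splits unnaturally."
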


From this we may state the $E^2$ page of the Kasparov-Schochet spectral sequence even more explicitly.

\begin{thm}
\label{one-vert}
    For a nontrivial monoid $k$-graph, $\lambda$, define $g:= \gcd(B_1,\cdots,B_k)$ the $E^2$ page of the Kasparov-Schochet spectral sequence has the form:
    \[
        E_{p,q}^2=\begin{cases}
            \Z_{g}^{\binom{k-1}{p}}, &  2|q \\
            0, & \text{ otherwise }
        \end{cases}
    \]
\end{thm}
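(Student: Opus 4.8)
The plan is to compute the homology of the Evans chain complex $\calD$ using the tensor decomposition $\calD \cong \bigotimes_{j=1}^k \calC^j$ from Theorem \ref{tensor-result} together with the K\"unneth theorem (Theorem \ref{kunneth}), and then transfer the answer to the $E^2$ page via Theorem \ref{evans-comp}. First I would record the homology of each two-term factor $\calC^j\colon 0\to\Z\xrightarrow{B_j}\Z\to 0$: since $B_j$ is a nonnegative integer (and some $B_i>0$, so the monoid graph is genuinely a $k$-graph), we have $H_1(\calC^j)=\ker(B_j)$ and $H_0(\calC^j)=\Z/B_j\Z$. Here one must split into cases according to whether $B_j=0$ or $B_j>0$; in the interesting case all relevant $B_j>0$ so $H_1(\calC^j)=0$ and $H_0(\calC^j)=\Z_{B_j}$, each concentrated in a single degree.

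Next I would feed these into K\"unneth. Each $\calC^j$ is a complex of free (hence flat) $\Z$-modules with flat boundaries, so the hypotheses of Theorem \ref{kunneth} are met and we may iterate: $H_p\!\left(\bigotimes_{j=1}^k \calC^j\right)$ fits into split short exact sequences built from $\bigotimes H_{i_j}(\calC^j)$ and $\operatorname{Tor}_1$ terms. Because each factor has homology in a single degree, the only way to land in total degree $p$ is to choose exactly $p$ of the $k$ factors to contribute their $H_1$ (which vanishes when $B_j>0$) and the rest to contribute $H_0$. The cleanest route is to split off one factor at a time, say $\calC^k$, and use that $H_1(\calC^k)=0$: then the Tor term in the K\"unneth sequence for $\calD^{k-1}\otimes\calC^k$ involves $\operatorname{Tor}_1^\Z\big(H_{p-1}(\calD^{k-1}),\Z_{B_k}\big)$ and the left term is $H_p(\calD^{k-1})\otimes\Z_{B_k}\oplus H_{p-1}(\calD^{k-1})\otimes 0$. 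Inducting on $k$, with the claim that $H_p(\calD^{k}) \cong \Z_{g_k}^{\binom{k-1}{p}}$ where $g_k=\gcd(B_1,\dots,B_k)$, one uses the elementary facts $\Z_a\otimes_\Z\Z_b\cong\Z_{\gcd(a,b)}\cong\operatorname{Tor}_1^\Z(\Z_a,\Z_b)$ to collapse every term to a power of $\Z_{g_k}$, and $\binom{k-2}{p}+\binom{k-2}{p-1}=\binom{k-1}{p}$ to get the binomial count. Care is needed with the base case $k=1$: $\calD^1$ is $0\to\Z\xrightarrow{B_1}\Z\to 0$, giving $H_0\cong\Z_{B_1}=\Z_{g_1}^{\binom{0}{0}}$ and $H_p=0$ for $p>0$, consistent with $\binom{0}{p}=0$.

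Finally I would invoke Theorem \ref{evans-comp}: the $E^2$ page is $E^2_{p,q}\cong H_p(\calD)$ when $0\le p\le k$ and $q$ is even, and $0$ otherwise. Substituting the computed $H_p(\calD)\cong\Z_g^{\binom{k-1}{p}}$ (noting $\binom{k-1}{p}=0$ automatically for $p>k-1$, and that $p=k$ gives $0$ as required, so the stated range $0\le p\le k$ is harmless) yields exactly the claimed formula.

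The main obstacle I anticipate is bookkeeping rather than conceptual: making the induction on $k$ rigorous requires keeping the split short exact sequences under control and verifying that the unnatural splitting in K\"unneth does not obstruct the identification of the middle group as an honest direct sum of copies of $\Z_g$ (it does not, since both outer terms are already such direct sums and extensions of finite abelian groups are constrained only up to isomorphism, which is all we claim). A secondary subtlety is handling the degenerate cases where some $B_j=0$: then $g=0$ as well, $\Z_g=\Z$, and one should check the formula still reads correctly, or else restrict the statement — I would note that the hypothesis only requires $B_i>0$ for \emph{some} $i$, so $g$ could still be $0$, and the same K\"unneth computation goes through with $\Z_0=\Z$, $\operatorname{Tor}_1^\Z(\Z,-)=0$, giving the same binomial count of copies of $\Z_g$.
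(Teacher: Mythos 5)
Your proposal is correct and follows essentially the same route as the paper: decompose $\calD\cong\bigotimes_{j=1}^k\calC^j$ via Theorem \ref{tensor-result}, induct on $k$ by splitting off one two-term factor at a time, apply the K\"unneth theorem with $\Z_a\otimes_\Z\Z_b\cong\operatorname{Tor}_1^\Z(\Z_a,\Z_b)\cong\Z_{\gcd(a,b)}$ and Pascal's rule, and handle the $B_j=0$ factors (where $H_*(\calC^j)\cong\Z$ and the Tor terms vanish) as a separate case exactly as the paper does. No substantive differences.
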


\begin{proof}
    The goal is to use Theorem \ref{tensor-result} which yields $\calD \cong \bigotimes^k \calC^j$. Then, the  K\"{u}nneth Theorem with some classical finite group theory results to obtain the necessary homology groups. As before, we may without loss of generality suppose that $B_1>0$.

    We proceed by induction on $k$. Consider the base case $H_p(\calC^1)$. Since $B_1\neq 0$, we observe that $E_{1,2q}=H_1(\calC^1)=\ker(B_1)=0$ and $E_{0,2q}=H_0(\calC^1)=\Z_{B_1}$ as desired. Fix some $j\geq 1$ and define $g_j:=\gcd(B_1,\cdots,B_j)$. Then suppose:
    \[ H_p(\calC^1\otimes\cdots\otimes\calC^j) \cong (\Z_{g_j})^{\binom{j-1}{p}}.\]
     Consider $(\calC^1\otimes\cdots\otimes \calC^j)\otimes \calC^{j+1}$. Since each module in our complex is free which implies projective \cite[cf$\sim\S2$]{weibel}, we may use the K\"{u}nneth Theorem and the inductive hypothesis to obtain
    \begin{align*}
        H_p((\calC^1\otimes\cdots\otimes \calC^j)\otimes \calC^{j+1})\cong & (\Z_{g_j})^{\binom{j-1}{p-1}}\otimes H_1(\calC^{j+1}) \oplus (\Z_{g_j})^{\binom{j-1}{p}}\otimes H_0(\calC^{j+1}) \oplus\\& \operatorname{Tor}_1^\Z\left((\Z_{g_j})^{\binom{j-1}{p-2}},H_1(\calC^{j+1})\right) \oplus \operatorname{Tor}_1^\Z\left((\Z_{g_j})^{\binom{j-1}{p-1}},H_0(\calC^{j+1})\right).
    \end{align*}
    There are two cases determined by $B_{j+1}$. First, suppose that $B_{j+1}=0$, and thus $g_{j+1}=g_{j}$. In this case, $H_*(\calC^{j+1})\cong \Z$. This would make the torsion groups $0$, and 
    \[ H_p((\calC^1\otimes\cdots\otimes \calC^j)\otimes \calC^{j+1})\cong (\Z_{g_j})^{\binom{j-1}{p}}\oplus (\Z_{g_j})^{\binom{j-1}{p-1}}\cong (\Z_{g_{j+1}})^{\binom{j}{p}}. \]

    Suppose that $B_{j+1}\neq 0$. This would mean that $H_1(\calC^{j+1})=0$ and $H_0(\calC^{j+1})=\Z_{B_{j+1}}$. In particular, this means that the terms containing $H_1(\calC^{j+1})$ vanish leaving 
    \[ (\Z_{g_j})^{\binom{j-1}{p}}\otimes\Z_{B_{j+1}} \oplus \operatorname{Tor}_1^\Z\left((\Z_{g_j})^{\binom{j-1}{p-1}}, \Z_{B_{j+1}} \right).\]
    It is a well known fact \cite[cf$\sim \S 6$]{massey} that $$\operatorname{Tor}_1^\Z\left((\Z_{g_j})^{\binom{j-1}{p-1}}, \Z_{B_{j+1}} \right)\cong (\Z_{g_j})^{\binom{j-1}{p-1}}\otimes\Z_{B_{j+1}}\cong (\Z_{g_j}\otimes\Z_{B_{j+1}})^{\binom{j-1}{p-1}} \cong (\Z_{g_{j+1}})^{\binom{j-1}{p-1}}$$ giving the desired form.

    We conclude that $H_p(\calD)\cong \Z_{g}^{\binom{k-1}{p}}$ which allows us to fill in the $E^2$ page of the Kasparov-Schochet spectral sequence with the desired result. \qedhere
\end{proof}

\begin{rem}
    In the case where $\Lambda\cong \N^k$ (i.e. trivial) then $C^*(\Lambda)\cong C(\T^k)$ and thus $K_1(C^*(\Lambda))\cong \Z^{2^{k-1}}\cong K_0(C^*(\Lambda))$. Because the $K$-theory is known, we omitted the case from Theorem \ref{one-vert}.
\end{rem}

The above result for monoid $k$-graphs also sheds some light on Theorem \ref{inv-k-thry}. In particular, we find that an invertible co-adjacency matrix is not necessary for trivial $K$-theory groups. 

\begin{cor}
\label{cor-triv-mon}
   Let $\Lambda$ be a monoid $k$-graph with $\gcd(B_1,\cdots,B_k)=1$. Then, $K_*(C^*(\Lambda))=0$.
\end{cor}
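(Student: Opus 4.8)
The plan is to read the statement off Theorem \ref{one-vert} and then run the same spectral-sequence collapse argument that concludes the proof of Theorem \ref{inv-k-thry}. First I would check that the hypothesis $\gcd(B_1,\dots,B_k)=1$ is compatible with — in fact forces — the nontriviality hypothesis of Theorem \ref{one-vert}. If $\Lambda$ were trivial, i.e. $\Lambda\cong\N^k$, then each coordinate matrix $M_i$ would equal $1$, hence each co-adjacency matrix $B_i=I-M_i^T$ would vanish, and then $\gcd(B_1,\dots,B_k)=0\neq 1$. So at least one $B_i$ is nonzero, $\Lambda$ is a nontrivial monoid $k$-graph, and Theorem \ref{one-vert} applies.

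Next, setting $g:=\gcd(B_1,\dots,B_k)=1$ in Theorem \ref{one-vert} gives $E^2_{p,q}\cong\Z_{1}^{\binom{k-1}{p}}\cong 0$ for even $q$, and $E^2_{p,q}=0$ otherwise; so the entire $E^2$ page of the Kasparov--Schochet spectral sequence is trivial. Since each page $E^{r+1}$ is a subquotient of $E^r$ (Definition \ref{spec-seq}), we get $E^r_{p,q}\cong 0$ for all $r\geq 2$ and all $p,q$, and in particular $E^\infty_{p,q}\cong 0$.

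Finally I would conclude exactly as in Theorem \ref{inv-k-thry}: since $E\implies K_*(C^*(\Lambda))$, for each $n$ there is a finite filtration of $K_n(C^*(\Lambda))$ whose successive quotients are the groups $E^\infty_{p,q}$ with $p+q=n$, all of which are zero; hence $K_n(C^*(\Lambda))\cong 0$ for every $n$, that is, $K_*(C^*(\Lambda))\cong 0$.

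There is essentially no hard step here — the corollary is a direct specialization of Theorem \ref{one-vert}. The only point demanding a moment's care is the observation that $\gcd=1$ rules out the trivial graph (and hence licenses the use of Theorem \ref{one-vert}), together with the remark that the passage from a vanishing $E^2$ page to vanishing $K$-theory is the identical reasoning already recorded at the end of the proof of Theorem \ref{inv-k-thry}, so it can simply be cited rather than repeated.
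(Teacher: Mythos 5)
Your proof is correct and is essentially the argument the paper intends: the corollary is a direct specialization of Theorem \ref{one-vert} with $g=1$, so the $E^2$ page vanishes and the convergence argument from the end of Theorem \ref{inv-k-thry} gives $K_*(C^*(\Lambda))\cong 0$. Your preliminary check that $\gcd(B_1,\dots,B_k)=1$ rules out the trivial graph (where all $B_i=0$) is a worthwhile detail the paper leaves implicit.
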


Corollary \ref{cor-triv-mon} has a very close relationship to the results of \cite{barlak}. However, it is important to note that their result required additional hypotheses on $\Lambda$. 

We conclude this article by investigating monoid $3$ graphs more generally. Specifically, this will demonstrate that knowing $E^\infty$ does not always give perfect knowledge of $K_*(C^*(\Lambda))$.

\begin{cor}
    Let $\Lambda$ be a monoid $3$-graph with $|\Lambda^{e_i}|>1$ for some $1\leq i\leq 3$ and define $g=\gcd(B_1,B_2,B_3)$. Then $K_1(C^*(\Lambda))=\Z_g^2$ and there exists a short exact sequence
    \[\Z_g \to K_0(C^*(\Lambda)) \to \Z_g.\]
\end{cor}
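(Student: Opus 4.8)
The plan is to apply Theorem~\ref{one-vert} with $k=3$ to read off the $E^2$ page, then track the spectral sequence to $E^\infty$, and finally reassemble the filtration to extract $K_*(C^*(\Lambda))$. First I would note that the hypothesis $|\Lambda^{e_i}|>1$ for some $i$ means $B_i = 1 - M_i^T$ with $M_i$ the $1\times 1$ matrix $(|\Lambda^{e_i}|)$, so $B_i \neq 0$ and $\Lambda$ is a nontrivial monoid $3$-graph in the sense of Theorem~\ref{one-vert}. Applying that theorem with $k=3$ gives $E^2_{p,q} \cong \Z_g^{\binom{2}{p}}$ when $2\mid q$ and $0$ otherwise; explicitly the nonzero entries in a single even row are $E^2_{0,q}\cong \Z_g$, $E^2_{1,q}\cong \Z_g^2$, $E^2_{2,q}\cong \Z_g$, and $E^2_{3,q}=0$ since $\binom{2}{3}=0$.

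Next I would argue that the spectral sequence degenerates at $E^2$, i.e. $E^\infty \cong E^2$. The differential $d^r_{p,q}\colon E^r_{p,q}\to E^r_{p-r,q+r-1}$ shifts the $q$-coordinate by $r-1$; since all nonzero groups are concentrated in even rows $q$, a differential $d^r$ with $r$ even lands in an odd row and is therefore zero, while for $r$ odd the source and target rows have the same parity. For $r\geq 3$ odd, $d^r$ moves $p$ by at least $3$, but the only nonzero columns are $p\in\{0,1,2\}$, so every such differential is zero as well (its target or source is $0$). Hence $E^\infty_{p,q}\cong E^2_{p,q}$, which also matches the assertion $E^\infty_{p,q}\cong E^{k+1}_{p,q} = E^4_{p,q}$ from Theorem~\ref{evans-comp}.

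Then I would translate $E^\infty$ into $K$-theory via the convergence filtration. Because the nonzero groups sit in even rows, the $K_1$ group (odd total degree $p+q$) collects the contributions from odd $p$ with even $q$, namely $p=1$: this gives a filtration of $K_1(C^*(\Lambda))$ with only one nonzero quotient $E^\infty_{1,q}\cong \Z_g^2$, so $K_1(C^*(\Lambda))\cong \Z_g^2$. For $K_0$ (even total degree), the contributing terms are $p=0$ and $p=2$ with even $q$, giving a two-step filtration $0 = F_{-1} \subseteq F_0 \subseteq F_2 = K_0(C^*(\Lambda))$ with $F_0/F_{-1}\cong E^\infty_{0,q}\cong\Z_g$ and $F_2/F_0\cong E^\infty_{2,q}\cong\Z_g$; since $F_0 = F_0/F_{-1}\cong\Z_g$, this is exactly the short exact sequence $\Z_g \to K_0(C^*(\Lambda))\to\Z_g$. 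I should be careful that the spectral sequence is $\Z/2$-graded (the homological/$K$-theoretic grading is mod $2$), so contributions from all even $q$ land in the same $K$-group; this is implicit in the statement of Theorem~\ref{evans-comp}, and I would cite the periodicity in $q$ there rather than re-deriving it.

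The main obstacle I anticipate is bookkeeping the $\Z/2$-periodicity cleanly: one must argue that, despite infinitely many nonzero rows $q$, the filtration of each $K_i$ has only finitely many distinct nonzero subquotients because the spectral sequence is really a $\Z/2$-graded object (so $E^\infty_{p,q}$ depends only on $p$ and the parity of $q$, and a given $K_i$ receives exactly the $E^\infty_{p,\bullet}$ with $p\equiv i\pmod 2$, appearing once). Once that framing is fixed, the extraction of $K_1\cong\Z_g^2$ is immediate and the short exact sequence for $K_0$ falls out of the two-term filtration; I would not attempt to resolve the extension problem, which is exactly the point of the surrounding remark that $E^\infty$ does not determine $K_*$.
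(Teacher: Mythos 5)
Your proposal is correct and follows essentially the same route as the paper: read off the $E^2$ page from Theorem \ref{one-vert}, show the sequence degenerates there (the paper does this in two steps, $E^2\cong E^3$ by parity of $q$ and $E^3\cong E^4$ because $\binom{2}{3}=0$ kills the only candidate $d^3$, which your single parity-plus-column-range argument subsumes), and then extract $K_1$ and the two-step filtration of $K_0$ from the convergence filtration. Your explicit observation that $|\Lambda^{e_i}|>1$ forces $B_i\neq 0$, so that Theorem \ref{one-vert} actually applies, is a small point the paper leaves implicit.
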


\begin{proof}
    Consider the map $d_{p,q}^2: E_{p,q}^2 \to E_{p-2,q+1}^2$. Observe that the parity of $q$ meaning that $\im(d_{p,q}^2)=0$ and $\ker(d_{p,q})=E_{p,q}^2$. We conclude that $E^2\cong E^3$, this is indeed true for any $k$-graph. Theorem \ref{one-vert} gives that $E^3_{p,2q}\cong E^2_{p,2q}\cong \Z_g^{\binom{2}{p}}$. In particular, this means that $d^3_{p,q}: E_{p,q}^3 \to E_{p-3,q}$ must be the zero map. We conclude as before that $E^3\cong E^4$, and thus the sequence has stabilized.

    First we examine the $K_1$ group. We look at the family $\cdots,E_{-1,2}, E_{1,0}, E_{3,-2}, \cdots$. By Theorem \ref{one-vert} $E_{3,-2}\cong 0$ so all but $E_{1,0}$ are isomorphic to $0$. Moreover, there exists a filtration of $K_1(C^*(\Lambda))$, $0\leq \cdots \leq F_{-1} \leq F_0\leq F_1 \leq F_2 \leq \cdots \leq K_1(C^*(\Lambda))$ such that $F_p/F_{p-1}\cong E_{p,1-p}$. The zeros in all positions aside from $E_{1,0}$ allows us to conclude that $F_{1}\cong K_1(C^*(\Lambda))$ and $F_0\cong 0$. That is, $K_1(C^*(\Lambda))\cong F_1/F_0\cong E_{1,0}\cong \Z_g^2$.

    To determine $K_0$, we must look at the family with even total degree ($2|(p+q)$). This family is $$\cdots, 0, 0, \Z_g, 0, \Z_g, 0, 0,\cdots$$ We now consider the filtration $0\cong F_{-1}\leq F_0 \leq F_1 \leq F_2 \cong K_0(C^*(\Lambda)) $. Since $2\nmid 1$, Theorem \ref{evans-chain-comp}, ensures that $E_{1,-1}= 0$. So, $F_1\cong F_0$, allowing us to refine to the filtration $0\leq F_0\leq F_2$. This ensures that $F_2/F_0\cong \Z_g$ providing the short exact sequence
    \[ 0 \to \Z_g \to K_0(C^*(\Lambda)) \to \Z_g \to 0.     \qedhere\] 

\end{proof}

\bibliography{The}{}
\bibliographystyle{plain}

\renewcommand{\thesection}{A}

\section{Appendix}

\begin{lem}
\label{spec-cases}
    Let $\Lambda$ be a $k$-graph with $B_k$ invertible. Then $H_0(\calD)\cong H_k(\calD)\cong 0$.
\end{lem}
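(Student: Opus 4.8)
The statement concerns the two extreme degrees of the Evans chain complex $\calD$ for a $k$-graph $\Lambda$ with $B_k$ invertible, which the main proof of Theorem \ref{inv-k-thry} deferred here. My plan is to read off $H_0$ and $H_k$ directly from the block-matrix description of the differentials in Theorem \ref{block-mat}, exploiting the fact that in these degrees the indexing sets $N_{p,k}$ degenerate.

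For $H_k(\calD)$: here $\calD_k = \bigoplus_{a \in N_{k,k}} \Z\Lambda^0$, and $N_{k,k}$ has exactly one element, namely $(1,2,\dots,k)$, so $\calD_k \cong \Z\Lambda^0$. The top differential $\partial_k^k \colon \calD_k \to \calD_{k-1}$ is, by the block formula, the column vector whose entries are $\pm B_j$ (this is precisely the right-hand matrix in Figure \ref{diff-fig} for $k=4$). Since $\Lambda_{k-1} = \iota_{k-1}^*(\Lambda)$ has one fewer coordinate, the recursion gives that the top-degree block always contains an entry $(-1)^{k+1}B_k$ acting as an isomorphism on one $\Z\Lambda^0$ summand. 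Because $H_k(\calD) = \ker(\partial_k^k)$ and one coordinate of $\partial_k^k$ is the injective map $\pm B_k$, the kernel is trivial. I would phrase this either via the explicit column vector or by induction on $k$ using the block decomposition $\partial_k^k = \begin{bmatrix} \partial_{k-1}^{k-1} \\ (-1)^{k+1}B_k \end{bmatrix}$ (the $\partial_k^{k-1}$ block being omitted as not well-defined), from which injectivity of the bottom block gives injectivity of $\partial_k^k$.

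For $H_0(\calD)$: here $\calD_0 \cong \Z\Lambda^0$ (as $N_{0,k} = \{*\}$) and $H_0(\calD) = \coker(\partial_1^k)$. The block recursion gives $\partial_1^k = \begin{bmatrix} (-1)^{2}B_k & \partial_1^{k-1} \end{bmatrix} = \begin{bmatrix} B_k & \partial_1^{k-1} \end{bmatrix}$ as a map from $\calD_1^{k-1} \oplus \calD_0^{k-1}$-type summands onto $\calD_0 = \Z\Lambda^0$; in particular its image already contains $B_k(\Z\Lambda^0) = \Z\Lambda^0$ since $B_k$ is surjective. Hence $\partial_1^k$ is surjective and the cokernel vanishes. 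The only care needed is to check the degree bookkeeping so that the relevant block of $\partial_1^k$ really is $\pm B_k$ on a full copy of $\Z\Lambda^0$ rather than something truncated; this follows from Proposition \ref{bij}(ii) identifying $N_1^+$ with $N_{0,k-1}^- = \{*\}$.

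The main obstacle, such as it is, is purely organizational: making sure the base cases and the "omitted" blocks in Theorem \ref{block-mat} are handled correctly at the two ends, since at $p=k$ the block $\partial_k^{k-1}$ disappears and at $p=1$ the block $\partial_0^{k-1}$ is the zero map out of the trivial group. Once the block shapes are pinned down, both claims are immediate from injectivity (resp. surjectivity) of $B_k$; there is no hard estimate or delicate argument, only the need to state the reduction cleanly.
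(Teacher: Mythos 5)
Your proposal is correct and follows essentially the same route as the paper: both identify $\partial_k$ as a column vector containing a $\pm B_k$ block (so injectivity of $B_k$ kills $H_k=\ker\partial_k$) and $\partial_1$ as a row vector containing a $B_k$ block (so surjectivity of $B_k$ kills $H_0=\coker\partial_1$). Your extra bookkeeping about which blocks are omitted at $p=1$ and $p=k$ is consistent with Theorem \ref{block-mat} and only makes explicit what the paper's displayed formulas for $\partial_1$ and $\partial_k$ assert directly.
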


\begin{proof}
    The differentials $\partial_1$ and $\partial_k$ of the Evans chain complex always take the form
    \begin{align*}
        \partial_{1} & = \begin{bmatrix}
        B_k & B_{k-1} & \cdots & B_1    
        \end{bmatrix}
        & \partial_k & = \begin{bmatrix}
            B_1 \\ -B_2 \\ \vdots \\ (-1)^{p+1} B_k
        \end{bmatrix}
    \end{align*}

    Since $H_0(\calD)=\coker(\partial_1)$ and $H_k(\calD)=\ker(\partial_k)$ we need only demonstrate surjectivity and injectivity respectively.

    Since $B_k$ is onto, $\partial_1 \begin{bmatrix} 0 \\ \vdots \\ \alpha \end{bmatrix}$ is onto $\Z\Lambda^0$. Lastly, $\partial_k(\alpha)=\partial_k(\beta)$ implies $B_k(\alpha)=B_k(\beta)$ and thus by injectivity of $B_k$, $\alpha=\beta$.
\end{proof}

\begin{lem}
\label{spec-tens}
    Under the hypotheses of Theorem \ref{tensor-result}, we have equalities $\partial^k_k\Psi_k= \Psi_{k-1} \partial^\otimes$ and $\partial^k_1\Psi_1= \Psi_{0} \partial^\otimes$.
\end{lem}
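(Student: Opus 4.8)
The plan is to run exactly the computation from the proof of Theorem \ref{tensor-result}, but at the two boundary degrees, where the only new feature is that one summand of the tensor module degenerates. The key inputs are: the explicit shapes of $\partial_1$ and $\partial_k$ recorded in Lemma \ref{spec-cases}; the fact that $\calD^{k-1}_{-1}=0$ and $\calD^{k-1}_k=0$ because $\calD^{k-1}$ is the Evans complex of a $(k-1)$-graph; and the values $\partial^\calC_0=0$, $\partial^\calC_1=B_k$ of the differential of $\calC\colon 0\to\Z\xrightarrow{B_k}\Z\to 0$, together with the description of $\Psi_p$ and $\partial^k_p$ as block maps given in Theorems \ref{block-mat} and \ref{tensor-result}.

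For $p=1$: since $\calD^{k-1}_{-1}=0$, the target $(\calD^{k-1}\otimes\calC)_0=\calD^{k-1}_0\otimes_\Z\Z$ has a single summand and $\Psi_0$ is multiplication. Writing a general element of $(\calD^{k-1}\otimes\calC)_1=\calD^{k-1}_0\otimes\calC_1\oplus\calD^{k-1}_1\otimes\calC_0$ as $\alpha\otimes x\oplus\beta\otimes y$, one computes $\partial^\otimes_1(\alpha\otimes x\oplus\beta\otimes y)=\alpha\otimes B_k x\oplus\partial^{k-1}_1(\beta)\otimes y$ (the terms with $\partial^\calC_0$ and $\partial^{k-1}_0$ drop out), so $\Psi_0\partial^\otimes_1$ yields $xB_k\alpha+y\,\partial^{k-1}_1(\beta)$. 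On the other side, $\Psi_1$ sends $\alpha\otimes x\oplus\beta\otimes y$ to the vector with $x\cdot\alpha$ in the $N^+$-slot and $y\cdot\beta$ in the $N^-$-slots, and applying $\partial^k_1=\bigl[\,B_k\ \big|\ \partial^{k-1}_1\,\bigr]$ from Lemma \ref{spec-cases} gives the same element $xB_k\alpha+y\,\partial^{k-1}_1(\beta)$; the sign $(-1)^{1+1}=1$ matches the $(-1)^0$ coming from the $\calC_1$-term of $\partial^\otimes$.

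For $p=k$: since $\calD^{k-1}_k=0$, the source $(\calD^{k-1}\otimes\calC)_k=\calD^{k-1}_{k-1}\otimes\calC_1$ has a single summand, so an element is $\alpha\otimes x$ with $\alpha\in\calD^{k-1}_{k-1}$. Here $\partial^\otimes_k(\alpha\otimes x)=\partial^{k-1}_{k-1}(\alpha)\otimes x\oplus(-1)^{k-1}\alpha\otimes B_k x$, and $\Psi_{k-1}$ turns this into the column vector $\bigl[x\cdot\partial^{k-1}_{k-1}(\alpha)\,,\ (-1)^{k-1}xB_k\alpha\bigr]^{T}$. Meanwhile $\Psi_k(\alpha\otimes x)=\bigl[x\cdot\alpha\,,\ 0\bigr]^{T}$ (only the $N^+$-slot survives), and applying $\partial^k_k=\bigl[\partial^{k-1}_{k-1}\,,\ (-1)^{k+1}B_k\bigr]^{T}$ from Lemma \ref{spec-cases} gives $\bigl[x\cdot\partial^{k-1}_{k-1}(\alpha)\,,\ (-1)^{k+1}xB_k\alpha\bigr]^{T}$. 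The two agree because $(-1)^{k+1}=(-1)^{k-1}$. I do not expect a genuine obstacle here: the content is entirely bookkeeping, and the only point requiring care is tracking which direct summand vanishes at each boundary and confirming that the sign $(-1)^{p\pm1}$ in $\partial^\otimes$ still matches the $(-1)^{p+1}$ appearing in the Evans block matrix, exactly as in the generic case $1<p<k$.
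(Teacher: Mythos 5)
Your proposal is correct and follows essentially the same route as the paper: identify the degenerate summand at each boundary degree ($N_{k,k}^-=\emptyset$, i.e. $\calD^{k-1}_k=0$, at the top and $\calD^{k-1}_{-1}=0$ at the bottom), expand $\partial^\otimes$ using $\partial^\calC_0=0$ and $\partial^\calC_1=B_k$, and match signs via $(-1)^{k+1}=(-1)^{k-1}$. Your write-up is in fact slightly cleaner than the paper's, which contains a typo ($\,(-1)^{k-1}B_1x$ in place of $(-1)^{k-1}\alpha\otimes B_kx$) in the $p=k$ computation.
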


\begin{proof}
    Consider $\partial^k_k\Psi_k$ and notice that $N_{k,k}^-=\emptyset$. Since $(\calD^{k-1} \otimes \calC)_k = \calD^{k-1}_{k-1} \otimes \calC_1$, we let $\alpha\otimes x\in \calD^{k-1}_{k-1} \otimes \calC_1$.
    \begin{align*}
        \partial^k_k\Psi_k(\alpha\otimes x) &= \begin{bmatrix} \partial_{k-1}^{k-1}\\ (-1)^{k+1}B_k\end{bmatrix}[x\cdot \alpha] = \begin{bmatrix} x\partial_{k-1}^{k-1}\alpha\\ x(-1)^{k+1}B_k\alpha\end{bmatrix} \\
        \Psi_{k-1}\partial^\otimes(\alpha\otimes x) &= \Psi_{k-1}(\partial^{k-1}_{k-1}(\alpha)\otimes x \oplus (-1)^{k-1} B_1 x) = \begin{bmatrix} x\partial_{k-1}^{k-1}\alpha\\ x(-1)^{k-1}B_k\alpha\end{bmatrix}
    \end{align*}

    For the $p=1$ case we return to general elements $\alpha\otimes x \oplus \beta\otimes y \in \calD_0^{k-1}\otimes \Z \oplus \calD_1^{k-1}\otimes \Z$. Additionally, we will utilize that $\partial^{k-1}_0=0$.

    \begin{align*}
        \partial^k_1\Psi_1(\alpha\otimes x \oplus \beta\otimes y) &= \begin{bmatrix} (-1)^{k+1}B_k & \partial_{1}^{k-1}\end{bmatrix} \begin{bmatrix} x\cdot\alpha \\ y\cdot \beta \end{bmatrix} = \begin{bmatrix} xB_k\alpha + y\partial_1^{k-1} \beta\end{bmatrix} \\
        \Psi_{0}\partial^\otimes(\alpha\otimes x \oplus \beta\otimes y) &= \Psi_{0}((-1)^{0}\otimes B_k x + \partial_1^{k-1} \beta \otimes y  ) = \begin{bmatrix} xB_k\alpha + y\partial_1^{k-1} \beta\end{bmatrix}
    \end{align*} \qedhere
\end{proof}

\end{document}